\documentclass[11pt]{amsart}

\usepackage[utf8x]{inputenc}
\usepackage[english]{babel}
\usepackage[all,cmtip]{xy}

\usepackage{graphicx}
\usepackage[margin=2.8cm]{geometry}
\linespread{1.1}

\usepackage{amsfonts}
\usepackage{amsmath}
\usepackage{amssymb}
\usepackage{amsthm}
\usepackage{dsfont}
\usepackage{tikz}
\usepackage{pgfplots}
\usepackage{MnSymbol}

\usepackage[T1]{fontenc}

\usepackage{paralist}

\usepackage{hyperref}



\renewcommand\phi{\varphi}
\newcommand\N{\mathbb{N}}

\newcommand\R{\mathbb{R}}


\newtheorem{thm}{Theorem}[section]
\newtheorem*{thm*}{Theorem}

\newtheorem{prop}[thm]{Proposition}

\newtheorem{quest}[thm]{Question}

\theoremstyle{definition}

\newtheorem{example}[thm]{Example}
\newtheorem{rem}[thm]{Remark}

\title[On embeddings of finite subsets of $\ell_2$]{On embeddings of finite subsets of $\ell_2$}

\author{James Kilbane}
\address{Department of Pure Maths and Mathematical Statistics, University of Cambridge}
\email{jk511@cam.ac.uk}

\date{\today}

\parindent=0pt
\parskip=3pt

\begin{document}

\begin{abstract}
We study finite subsets of $\ell_2$, and more generally any metric space, and consider whether these isometrically embed into a Banach space. Our results partially answer a question of Ostrovskii \cite{ostrov}, on whether every infinite-dimensional Banach space contains every finite subset of $\ell_2$ isometrically.
\end{abstract}

\maketitle


\section{Introduction} \label{sec:intro}
The area of embedding metric spaces into Banach spaces has been an active area of research. Questions in this area tend to form the following type: Suppose I have a Banach space $X$ and a metric space lying in some class $\mathcal{Y}$. Is there an embedding with some 'good' property into $X$? A variety of formats of this question have been asked, eg, infinite discrete metric spaces with some expander property embedding into $L_1$, locally finite metric spaces whose finite subsets admit bilipschitz embeddings, etc.

This paper focuses on the following variant of this question: 
\begin{quest}\label{quest:u}
Suppose that $X$ is a Banach space and let $\mathcal{Y}$ be the class of finite metric spaces that admit $1+\epsilon$ distortion embeddings into $X$ for each $\epsilon > 0$. Then, for any $Y \in \mathcal{Y}$, does there exist an isometric embedding of $Y$ into $X$? Can we come up with criteria on the spaces $X$ such that this is true?
\end{quest}

This question attempts to capture a compactness principle for Banach spaces, which could broadly be phrased as "is there a way of passing to a limit?" 

The following related question was raised by Ostrovskii \cite{ostrov} on MathOverflow:
\begin{quest} \label{quest:v}
Suppose that $X$ is an infinite-dimensional Banach space, and let $\mathcal{Y}$ be the class of finite subsets of $\ell_2$. Then does any $Y \in \mathcal{Y}$ admit an isometric embedding into $X$?
\end{quest}

The purpose of this paper is to give some insights into both of these questions. For Question \ref{quest:v} we will establish the following result:
\begin{thm}
Suppose that $X$ is an infinite-dimensional Banach space and $Y$ is an affinely independent finite subset of $\ell_2$. Then $Y$ admits an isometric embedding into $X$.
\end{thm}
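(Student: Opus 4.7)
After translating we may assume $y_0 = 0$, so that affine independence of $Y = \{0, y_1, \dots, y_n\}$ is equivalent to linear independence of $y_1, \dots, y_n$ in $\ell_2$; let $V = \operatorname{span}(y_1, \dots, y_n)$, which with its inherited inner product is isometric to $\ell_2^n$. Setting $x_0 := 0$, the goal is to find $x_1, \dots, x_n \in X$ such that the pairwise-distance map $D \colon X^n \to \R^{\binom{n+1}{2}}$, defined by $D(x_1, \dots, x_n) = (\|x_i - x_j\|_X)_{0 \le i < j \le n}$, attains the target value $d^\star = (\|y_i - y_j\|_{\ell_2})_{i<j}$.

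\textbf{Approximation via Dvoretzky.} The first ingredient is Dvoretzky's theorem: for every $\epsilon > 0$, there is an $n$-dimensional subspace $E_\epsilon \subset X$ and a linear isomorphism $T_\epsilon \colon V \to E_\epsilon$ of distortion at most $1 + \epsilon$. Setting $\tilde{x}_i := T_\epsilon(y_i)$ yields a configuration whose distance vector $D(\tilde{x}_1, \dots, \tilde{x}_n)$ lies within $O(\epsilon)$ of $d^\star$. Hence $d^\star$ belongs to the closure of the image of $D$; the substantive task is to promote this to actual membership.

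\textbf{Perturbation to exactness.} The key claim is a local openness statement: at a configuration $(\tilde{x}_1, \dots, \tilde{x}_n)$ arising from a Dvoretzky embedding with $\epsilon$ sufficiently small, the image of $D$ contains a neighborhood of $D(\tilde{x})$ whose radius is bounded below uniformly as $\epsilon \to 0$. In the Euclidean setting this is classical: at an affinely independent configuration in $\ell_2^n$, $D$ is a submersion (its derivative has full row rank $\binom{n+1}{2}$, by non-degeneracy of the Cayley--Menger form). To carry this over to $X$, one exploits the infinite-dimensionality crucially: for each edge $(i,j)$, one can perturb the endpoints by a small vector in a fresh Dvoretzky subspace of $X$ chosen so that in a Pythagorean-type approximation only the $(i,j)$-distance is affected to first order. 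A fixed-point or implicit-function argument then solves the $\binom{n+1}{2}$-variable system simultaneously, producing exact distances whenever $d^\star$ lies within the guaranteed neighborhood.

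\textbf{The main obstacle.} The chief difficulty is making the perturbation argument rigorous without smoothness of the norm on $X$, which a priori rules out a direct appeal to the classical inverse function theorem. I would handle this by performing the entire perturbation inside a single sufficiently high-dimensional, almost-Euclidean Dvoretzky subspace of $X$ (of dimension well above $n$, with distortion much less than $1$). Within such a subspace the norm is quantitatively close to Hilbertian, and the non-degeneracy of the Euclidean Cayley--Menger derivative persists by continuity, so a Newton-type iteration converges to a genuine solution of $D(x_1, \dots, x_n) = d^\star$ in $X$.
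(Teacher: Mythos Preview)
Your high-level architecture---reduce via Dvoretzky to a space $(1+\delta)$-close to $\ell_2^n$, then perturb an approximate configuration to an exact one---matches the paper. The genuine gap is in your final paragraph. You propose to run a ``Newton-type iteration'' inside a large almost-Euclidean Dvoretzky subspace, arguing that ``the non-degeneracy of the Euclidean Cayley--Menger derivative persists by continuity.'' But Banach--Mazur proximity to $\ell_2^N$ says nothing about differentiability of $\|\cdot\|_X$: a norm can satisfy $\|x\|_2 \le \|x\|_X \le (1+\delta)\|x\|_2$ and still fail to be G\^ateaux differentiable on a dense set. So there is no derivative of $D$ in the $X$-norm to compare with the Euclidean one, and no Newton step to take. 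Your earlier aside about a ``fixed-point or implicit-function argument'' is on the right track; it is the implicit-function route that breaks, and the fixed-point route that survives.

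The paper supplies exactly the missing topological substitute. It works inductively: having placed $f(p_0),\dots,f(p_k)$ inside a space $Z$ with $\|\cdot\|_Z \le \|\cdot\|_2 \le (1+\delta_Y)\|\cdot\|_Z$, it parametrizes candidate placements of $p_{k+1}$ by a vector $\boldsymbol{\epsilon}\in[0,\epsilon]^{k+1}$ of slack variables (via the Cayley--Menger criterion, which guarantees that the perturbed $(k+2)$-point metric still embeds affinely independently into $\ell_2^{k+1}$), and then defines a continuous self-map $\varphi$ of the cube whose fixed point yields $\|g(\boldsymbol{\epsilon})-f(p_i)\|_Z = d_Y(p_{k+1},p_i)$ exactly. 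Brouwer's theorem---which needs only continuity of the norm, not smoothness---then closes the argument. If you want to keep your ``all edges at once'' framing, you could likely replace Newton by a degree/Brouwer argument on a suitable self-map of $[0,\epsilon]^{\binom{n+1}{2}}$, but as written the proposal does not do this, and that is the step you must actually carry out.
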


This will be proved in Section 3. Theorem 1.3 is, in some sense, a local result. The construction proceeds inductively: we embed the first $k$ points of $Y$ and then search for an embedding of the $k+1$'st point (see the proof of Theorem 3.1). We then show that such an inductive procedure (where the points become fixed each time) can not work in the case of affine dependence (see Theorem \ref{thm:counter}).

After the first edition of this preprint, it was pointed out to the author (by Lionel Nguyen Van Th\'{e}) that Theorem 3.1 was proven in a paper of Shkarin \cite{shkarin}. We thus make no claims of originality of the contents of Theorem 3.1, however the method of proof is different here.

In Section 4, we will discuss Question \ref{quest:u}. First of all we will show that known results imply that many spaces satisfy the conclusions of Question 1.1. After this we will show that a method similar to that of the proof of Theorem 3.1 allows us to show that any space with no non-trivial cotype contains every concave metric space (for the definition of concave see Section 2.1). 

We then discuss some counter-examples, and show that not all spaces satisfy the conclusion of Question 1.1. We can, in fact, generate an entire class of such spaces.

The results of Section 4 allow us to extend our results about Question 1.2, and show that several spaces contain all finite subsets of $\ell_2$ isometrically.

The paper concludes with an application of Theorem 3.1, some general remarks, and some open problems.
\section{Definitions and Notation}
\subsection{Metric Space Definitions} In this paper, we will mainly use the letters $X,Y,\dots$ for a metric space with corresponding metrics $d_X, d_Y, \dots$.

Suppose that $f:X \rightarrow Y$ is a map on metric spaces. We say that $f$ \emph{has distortion} $K$ if there is some $r > 0$ such that $r d_X(x,y) \leq d_Y(f(x),f(y)) \leq r K d_X(x,y)$.

If $(X,d)$ is a metric space, and $\alpha \in (0,1)$ then the \emph{$\alpha$-snowflake} of $X$ is defined to be the metric space $(X,d^\alpha)$.

The following definition is less standard: We say that a metric space is \emph{concave}\cite{weaver} if the triangle inequality is always strict, ie that for all distinct triples $x,y,z$, $d_Y(x,y) + d_Y(y,z) > d_Y(x,z)$. We remark that for finite metrics the concave metrics are 'dense' amongst them (in the sense of Gromov-Hausdorff distance). This is because if we have $d_Y$ a metric, the $\alpha$-snowflake $d_Y^\alpha$ for $\alpha \in (0,1)$ is a metric, and $d_Y^{1-1/n} \rightarrow d_Y$.

We say that a metric space is \emph{equilateral} if there exists some constant $K$ such that whenever $x \neq y$, $d(x,y) = K$.

If $x,y$ are distinct points in a metric space, we say that $z$ is \emph{a metric midpoint of $x$ and $y$} if $d(x,z) = d(z,y) = \frac{1}{2} d(x,y)$.

\subsection{Banach Space Definitions and Classical results}\label{prereq} Suppose that $X,Y$ are Banach spaces. We let $d(X,Y)$ be the \emph{Banach-Mazur distance} defined by $d(X,Y) = \inf \{\|T\| \|T^{-1}\| : T$ is an isomorphism from $X$ to $Y\}$.

The main classical result we will use is the Dvoretzky theorem which says (heuristically) that every infinite-dimensional Banach space contains $n$-dimensional slices that arbitrarily well look like $\ell_2^n$. More precisely,
\begin{thm}
Let $n \in \N$ and $\epsilon > 0$. Then there is some $N$ (depending on both $n$ and $\epsilon$) such that if $X$ is an $N$-dimensional Banach space, then there is a subspace $Y$ of $X$ such that $d(Y,\ell_2^n) \leq 1+\epsilon$.
\end{thm}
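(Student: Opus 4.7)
This is the classical Dvoretzky theorem, and my plan is to sketch Milman's concentration-of-measure proof. The three ingredients are a careful choice of auxiliary Euclidean structure on $X$, L\'evy's isoperimetric inequality on the Euclidean sphere, and a net and union-bound argument to pass from control at a single random point to control on an entire random subspace.

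First I would equip $X$ with an auxiliary inner product, giving a Euclidean norm $|\cdot|$ for which the map $f(x)=\|x\|_X$ is an $L$-Lipschitz function on the Euclidean unit sphere $S^{N-1}$, with median $M$. The critical (and most delicate) preliminary is to choose this Euclidean structure well: a naive choice -- e.g.\ using the John ellipsoid of the unit ball of $X$ -- can give $L/M$ as large as $\sqrt N$, which is too big to be useful. A careful choice, via the Dvoretzky-Rogers lemma or the $\ell$-norm position, yields $L/M$ growing only very slowly with $N$, and this ratio is what ultimately controls the dependence of $N$ on $n$ and $\epsilon$.

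Second I would invoke L\'evy's isoperimetric inequality: for any $L$-Lipschitz $f\colon S^{N-1}\to\R$ with median $M$, and any $\eta>0$,
\[
\sigma\bigl(\{x\in S^{N-1} : |f(x)-M|\geq\eta\}\bigr) \;\leq\; C\exp\!\bigl(-c\eta^2 N/L^2\bigr),
\]
where $\sigma$ is the uniform probability measure on the sphere. Third I would fix an $n$-dimensional reference subspace $E_0\subset\R^N$ and a $\delta$-net $\mathcal{N}$ of its Euclidean unit sphere with $\delta$ a small multiple of $\epsilon M/L$; a volumetric argument gives $|\mathcal{N}|\leq (3/\delta)^n$. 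For a uniformly random rotation $U\in O(N)$, the vector $Uv$ is uniform on $S^{N-1}$ for each $v\in\mathcal{N}$, so the concentration estimate applied with $\eta=\epsilon M$, combined with a union bound over $\mathcal{N}$, shows that provided
\[
(3/\delta)^n \cdot C\exp\!\bigl(-c\epsilon^2 M^2 N/L^2\bigr) \;<\; 1,
\]
there is positive probability that every $v\in\mathcal{N}$ satisfies $\bigl|\|Uv\|_X - M\bigr|\leq\epsilon M$.

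Finally, fix such a $U$ and set $Y=UE_0$. A standard successive-approximation argument, using the net property together with the Lipschitz bound $L$, upgrades the control from $\mathcal{N}$ to every unit vector of $Y$: each such $y$ satisfies $\bigl|\|y\|_X - M\bigr| \leq O(\epsilon)M$. Hence $\|\cdot\|_X$ restricted to $Y$ is uniformly within a factor $1+O(\epsilon)$ of the scaled Euclidean norm $M|\cdot|$, so $d(Y,\ell_2^n)\leq 1+O(\epsilon)$, and rescaling $\epsilon$ at the outset yields $d(Y,\ell_2^n)\leq 1+\epsilon$. The principal obstacle, as flagged in the first step, is the initial choice of Euclidean structure: without the Dvoretzky-Rogers input the concentration inequality is simply too weak to survive the union bound over the net, and one does not recover a finite dimension $N(n,\epsilon)$.
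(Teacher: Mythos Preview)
The paper does not prove this statement at all: it simply quotes Dvoretzky's theorem as background and refers the reader to \cite{albiac} for a proof. So there is nothing to compare your argument against here.

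That said, your sketch is a faithful outline of Milman's concentration-of-measure proof, which is indeed the argument given in the cited reference. The three steps --- a well-chosen auxiliary Euclidean structure, L\'evy's isoperimetric inequality on $S^{N-1}$, and a net-plus-union-bound to upgrade pointwise control to a whole subspace --- are the right ones, and you are correct to flag the choice of Euclidean structure (via Dvoretzky--Rogers or the $\ell$-position) as the delicate ingredient needed to keep $L/M$ small enough for the union bound to succeed. For the purposes of this paper a citation suffices, but your outline would serve as a sound sketch if one were asked to supply the proof.
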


A proof of this (and more detailed discussion of the statement) can be found in, say, \cite{albiac}.

This shows that any infinite-dimensional space that satisfies the conclusion of Question 
\ref{quest:u} satisfies the conclusion of Question \ref{quest:v}. Indeed, suppose we have a finite subset $Y$ of $\ell_2$. Then $Y$ lies in some $\ell_2^n$ for $n \leq |Y|$. So, if we choose some subspace $Z$ of $X$ for which $d(Z,\ell_2^n) \leq 1+\epsilon$, we obtain a $1+\epsilon$ distortion embedding of $Y$ into $Z$.

We will also use certain convexity properties of Banach spaces. Suppose that $X$ is a Banach space. We say that $X$ is \emph{strictly convex} if, for any $x \neq y \in S_X$ we have that $\|x+y\| < 2$. A quantized version exists: we say that $X$ is \emph{uniformly convex} if for all $\epsilon > 0$ there is some $\delta>0$ such that whenever $\|x-y\| = \epsilon$ we have that $\|x+y\| \leq 2 - 2 \delta$.

The most useful interpretation of this will be geometric for us; the idea of strict convexity is simply the idea that the unit sphere contains no straight line segments. We will use the following consequence of this property: a space that is strictly convex has unique midpoints, ie, for all $x,y,z \in X$ if $\|x-z\| = \|y-z\| = \frac{1}{2} \|x-y\|$, then $z = \frac{x+y}{2}$.

In the sequel we will also be interested in defining 2-dimensional Banach spaces. We will use the term \emph{convex curve} to mean a simple closed curve in $\mathbb{R}^2$ whose interior is a convex set. We will also use the idea of a \emph{symmetric curve}: a curve $\Gamma$ is symmetric if $x \in \Gamma \iff -x \in \Gamma$. The main idea is that we take a convex symmetric curve whose interior contains the origin. This defines a Banach space through its Minkowski functional as follows:
\begin{thm}
Suppose that $\Gamma$ is a convex symmetric curve in $\mathbb{R}^2$ and let $\overline{\Gamma}$ be the curve together with its interior. Then the Minkowski functional associated to $\Gamma$, $p_\Gamma(x) = \inf\{ \lambda \in \mathbb{R}^+ : x \in \lambda \overline{\Gamma}\}$ defines a norm on $\mathbb{R}^2$ whose unit sphere is $\Gamma$.
\end{thm}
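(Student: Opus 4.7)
The plan is to verify the three norm axioms and then separately identify $\Gamma$ with the unit sphere of $p_\Gamma$. Since $\Gamma$ is a simple closed curve whose interior is convex and contains the origin, there exist $0 < r < R$ such that the Euclidean disc of radius $r$ is contained in $\overline{\Gamma}$, which is in turn contained in the Euclidean disc of radius $R$. These two inclusions immediately yield $\|x\|_2/R \leq p_\Gamma(x) \leq \|x\|_2/r$ for every $x$, so $p_\Gamma$ is finite-valued and vanishes only at $0$.

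For absolute homogeneity, the substitution $\mu = \lambda/\alpha$ in the defining infimum gives $p_\Gamma(\alpha x) = \alpha p_\Gamma(x)$ for $\alpha > 0$; the symmetry hypothesis $\overline{\Gamma} = -\overline{\Gamma}$ upgrades this to $p_\Gamma(\alpha x) = |\alpha|\,p_\Gamma(x)$ for all real $\alpha$. The triangle inequality is where convexity of $\overline{\Gamma}$ is actually used: given $\epsilon > 0$, pick $\lambda, \mu$ within $\epsilon$ of $p_\Gamma(x), p_\Gamma(y)$ from above so that $x/\lambda, y/\mu \in \overline{\Gamma}$, and observe
\[ \frac{x+y}{\lambda+\mu} \;=\; \frac{\lambda}{\lambda+\mu}\cdot\frac{x}{\lambda} + \frac{\mu}{\lambda+\mu}\cdot\frac{y}{\mu}, \]
which is a convex combination of points of $\overline{\Gamma}$ and hence lies in $\overline{\Gamma}$. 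Therefore $p_\Gamma(x+y) \leq \lambda + \mu$, and letting $\epsilon \to 0$ yields subadditivity.

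To identify the unit sphere with $\Gamma$, the key geometric input is that every ray from the origin meets $\Gamma$ in exactly one point. This follows from the Jordan curve theorem (the ray must exit the bounded interior through some point of $\Gamma$) together with convexity (once the ray has left $\overline{\Gamma}$ it cannot re-enter, since the intersection of a line with a convex set is connected). Writing $tx \in \Gamma$ for the unique $t > 0$ along the ray through a given nonzero $x$, I would show $p_\Gamma(x) = 1/t$: the inequality $p_\Gamma(x) \leq 1/t$ is immediate from $x \in (1/t)\overline{\Gamma}$, while any $\lambda < 1/t$ with $x \in \lambda\overline{\Gamma}$ would place $x/\lambda$ strictly past $\Gamma$ on the ray yet inside $\overline{\Gamma}$, contradicting the unique-intersection property. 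In particular $p_\Gamma(x) = 1$ precisely when $t = 1$, i.e. $x \in \Gamma$.

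The hard part is really the ray-intersection lemma, which translates the topological hypothesis ``simple closed curve whose interior is convex'' into the analytic statement that the origin-to-boundary radius function is well defined; the Minkowski-functional bookkeeping before that is standard, and the symmetry assumption is used only in extending homogeneity to negative scalars.
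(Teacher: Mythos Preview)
Your argument is correct and is the standard verification that the Minkowski functional of a symmetric convex body is a norm with the prescribed unit sphere. The paper itself offers no proof of this statement: it is quoted as a classical background fact (Theorem~2.2) and used only implicitly later when defining two-dimensional norms via their unit curves, so there is nothing in the paper to compare your route against.
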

We will implicitly use this later, where we will define a norm simply by specifying such a $\Gamma$.
\subsection{Cayley Menger Determinants}\label{sec:cmdet} The \emph{Cayley-Menger determinant} can be used to classify which finite metric spaces do or do not embed into Euclidean space. The interested reader should see \cite{swane} for a readable account of the theory. We will write CMDet for the Cayley-Menger determinant, and it is defined as follows: let $y_1, y_2,\dots$ be distinct points in a space $Y$ and write $\rho_{ij} = (d_Y(y_i,y_j))^2$. Then we write $$ \text{CMDet}(y_1,\dots,y_n) = \det \left( \begin{array}{cc} P & 1 \\ 1^T & 0 \end{array}\right) $$where $P$ is the $n\times n$ matrix $P_{ij} = \rho_{ij}$ and 1 is the column vector of length $n$ consisting entirely of 1's. The following theorem is the aforementioned sharp criterion:
\begin{thm}[\cite{swane}, Theorem 5]
A metric space consisting of $n+1$ distinct points $y_0,\dots,y_n$ can be embedded isometrically into $\ell_2^n$ as an affinely independent set if and only if the sign of CMDet$(y_0,\dots,y_k)$ is $(-1)^{k+1}$ for each $k = 1,2,\dots,n$.
\end{thm}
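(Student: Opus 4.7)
The plan is to reduce the theorem to the classical formula
\[
\text{CMDet}(y_0,\dots,y_k) \;=\; (-1)^{k+1}\,2^{k}(k!)^{2}\,V_k^{2},
\]
where $V_k$ denotes the $k$-dimensional Euclidean volume of the simplex spanned by $y_0,\dots,y_k$ (whenever those points have been realized in some $\ell_2^m$). Once this identity is in hand, the sign condition is merely the assertion that every sub-simplex $(y_0,\dots,y_k)$ has positive $k$-volume, i.e.\ is affinely non-degenerate.

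To establish the volume formula, I would start from a realization $v_0,\dots,v_k\in\ell_2^m$, consider the vectors $w_i = v_i-v_0$, and compute $V_k^{2} = \frac{1}{(k!)^2}\det(\inner{w_i}{w_j})$ via the Gram matrix. Then one rewrites $2\inner{w_i}{w_j} = \|w_i\|^2 + \|w_j\|^2 - \|w_i-w_j\|^2 = \rho_{0i}+\rho_{0j}-\rho_{ij}$, so the Gram matrix is an explicit affine function of the $\rho_{ij}$. A row/column expansion of the bordered Cayley--Menger matrix (add multiples of the last row/column to kill the diagonal entries, then extract the $\rho_{0i}$ contributions) identifies its determinant, up to the predicted factor of $(-1)^{k+1}2^{k}(k!)^2$, with $\det(\inner{w_i}{w_j})$. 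This is a purely linear-algebraic manipulation and is where the main computational labor lies; it is also the step I expect to be the principal obstacle, since the sign bookkeeping and the emergence of the factor $2^{k}$ have to be tracked carefully.

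With the formula in hand, the forward direction is immediate: if $y_0,\dots,y_n$ embed isometrically as an affinely independent set, then for each $k\leq n$ the truncated set $y_0,\dots,y_k$ spans a genuine $k$-simplex with $V_k>0$, forcing $\sgn\text{CMDet}(y_0,\dots,y_k) = (-1)^{k+1}$.

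For the converse I would induct on $k$. Suppose $y_0,\dots,y_{k-1}$ have been realized as affinely independent points $v_0,\dots,v_{k-1}\in\ell_2^{k-1}\subset\ell_2^{k}$. Any candidate $v_k\in\ell_2^{k}$ at the prescribed distances decomposes as $v_k=p+te$ where $p$ lies in the affine hull of $v_0,\dots,v_{k-1}$ and $e$ is a unit vector orthogonal to it. The constraints $\|v_k-v_i\|=d(y_k,y_i)$ for $i<k$ form an affine linear system in $p$ whose coefficient matrix is nonsingular (by the inductive affine independence), and hence determine $p$ uniquely; subtracting, they determine $t^{2}$. Using the volume identity at steps $k$ and $k-1$ together with the classical base-times-height relation $V_k = \tfrac{1}{k}|t|V_{k-1}$, one computes
\[
t^{2} \;=\; -\,\frac{\text{CMDet}(y_0,\dots,y_k)}{2\,\text{CMDet}(y_0,\dots,y_{k-1})}.
\]
The hypothesized sign pattern makes numerator and denominator of opposite sign, so $t^{2}>0$ and a valid (in fact two, reflected) choice of $v_k$ exists. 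By construction $v_0,\dots,v_k$ are affinely independent in $\ell_2^{k}$, closing the induction.
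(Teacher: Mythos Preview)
The paper does not prove this statement at all: it is quoted as Theorem 5 of \cite{swane} and used as a black box in the proof of Theorem~\ref{thm:fixed}. So there is no ``paper's own proof'' to compare against.

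Your outline is the standard proof of the Cayley--Menger criterion and is essentially correct. One point worth tightening: in the inductive step you invoke the volume identity for $V_k$ to derive $t^{2}=-\text{CMDet}(y_0,\dots,y_k)/\bigl(2\,\text{CMDet}(y_0,\dots,y_{k-1})\bigr)$, but at that stage $v_k$ has not yet been shown to exist as a real point, so $V_k$ is not yet defined. The clean fix is to observe that the identity $\text{CMDet}(y_0,\dots,y_k)=(-1)^{k+1}2^{k}\det\bigl((\rho_{0i}+\rho_{0j}-\rho_{ij})/2\bigr)_{1\le i,j\le k}$ is a polynomial identity in the $\rho_{ij}$, valid regardless of embeddability; equivalently, your $t^{2}$ (as solved from the affine system) and the displayed ratio are equal as rational functions of the $\rho_{ij}$, since they agree on the Zariski-dense locus where the embedding exists. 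With that remark in place the argument is complete.
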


\section{Affinely Independent Subsets of \texorpdfstring{$\ell_2$}{l2}}\label{sec:brou}
The main purpose of this section is to prove Theorem 1.3, ie, that affinely independent subsets of $\ell_2$ embed isometrically into every infinite-dimensional Banach space. In light of Dvoretzky's theorem, if we show that every affinely independent subset of $\ell_2$ embeds isometrically into small perturbations of $\ell_2^n$ for suitable $n$ then we will be done. More precisely, we prove the following:

\begin{thm}\label{thm:fixed}
Suppose $Y$ is a finite, affinely independent subset of $\ell_2$. Then there is some $\delta_Y > 0$ such that if $X$ is a Banach space with $d(X,\ell_2^{|Y|-1}) < 1 + \delta_Y$ then $Y$ embeds isometrically into $X$.
\end{thm}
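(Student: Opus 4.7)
The plan is to place the points $y_0, \ldots, y_n$ (with $n = |Y|-1$) into $X$ one at a time, using Brouwer's fixed-point theorem at each step. By rescaling, identify the underlying vector space of $X$ with $\mathbb{R}^n$ and equip it with a norm $\|\cdot\|_X$ satisfying $\|v\|_2 \le \|v\|_X \le (1+\delta)\|v\|_2$. Place $y_0,\ldots,y_n \in \mathbb{R}^n$ at their natural affinely independent Euclidean positions, let $d_{ij} = \|y_i-y_j\|_2$, and start with $x_0 = y_0$.

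For the inductive step, suppose $x_0,\ldots,x_k$ have been produced with $\|x_i-x_j\|_X = d_{ij}$ and with each $\|x_i-y_i\|_2$ small enough that $x_0,\ldots,x_k,y_{k+1}$ remain affinely independent. Let $W = \aff(x_0,\ldots,x_k,y_{k+1})$, a $(k+1)$-dimensional affine subspace, and consider the continuous maps $\Phi_2,\Phi_X \colon W \to \mathbb{R}^{k+1}$ defined by $\Phi_2(z)_i = \|z-x_i\|_2^2$ and $\Phi_X(z)_i = \|z-x_i\|_X^2$. The Jacobian of $\Phi_2$ at $y_{k+1}$ has rows $2(y_{k+1}-x_i)^T$, which form a basis of the tangent space of $W$ exactly by the affine independence hypothesis, so $\Phi_2$ is a local diffeomorphism at $y_{k+1}$ with inverse $\Phi_2^{-1}$ defined and $L$-Lipschitz on a neighborhood of $\Phi_2(y_{k+1})$. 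Setting $d^{(2)} = (d_{0,k+1}^2,\ldots,d_{k,k+1}^2)$, define
$$F(z) = \Phi_2^{-1}\!\bigl(d^{(2)} - (\Phi_X(z)-\Phi_2(z))\bigr)$$
on a small closed ball $B_W(y_{k+1},r) \subset W$. Any fixed point $x_{k+1}$ of $F$ satisfies $\Phi_X(x_{k+1}) = d^{(2)}$, i.e., $\|x_{k+1}-x_i\|_X = d_{i,k+1}$ for each $i \le k$. Norm equivalence gives $\|\Phi_X(z)-\Phi_2(z)\|_\infty \le C(Y)\delta$ on a bounded region, so $\|F(z)-y_{k+1}\|_2 \le L C(Y)\delta$ plus an error from the inductive drift; choosing $r$ first (from the geometry of $Y$) and then $\delta_Y$ small ensures $F(B_W(y_{k+1},r)) \subset B_W(y_{k+1},r)$, and Brouwer's theorem produces $x_{k+1}$.

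The hardest part is the quantitative bookkeeping: one must extract $r$, $L$, and $\delta_Y$ depending only on $Y$ (essentially via uniform lower bounds on the smallest singular values of the matrices built from differences $y_{k+1}-y_i$), and show that the drift $\|x_i-y_i\|_2$ accumulated over the $n$ steps stays below the threshold needed to preserve affine independence and the local-diffeomorphism hypothesis throughout. Once the constants are ordered correctly --- first $r$ from the affine-independence geometry of $Y$, then $\delta_Y$ small enough that the $n$-fold cumulative drift is controlled --- the induction runs to $k = n$ and delivers the required isometric copy of $Y$ inside $X$.
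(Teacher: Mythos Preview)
Your argument is correct and follows the same overall strategy as the paper---place the points one at a time and, at each step, find the next point via Brouwer's fixed-point theorem---but the implementation of the inductive step is genuinely different. The paper parametrizes the search for $x_{k+1}$ by a vector $\boldsymbol{\epsilon}\in[0,\epsilon]^{k+1}$ of \emph{distance} perturbations: for each $\boldsymbol{\epsilon}$ it builds an abstract $(k+2)$-point metric, invokes the Cayley--Menger determinant criterion to realize it as an affinely independent subset of $\ell_2^{k+1}$ extending the already-placed points, and then defines the self-map of the cube by the discrepancy between the $\|\cdot\|_2$- and $\|\cdot\|_Z$-distances to the new point. You instead parametrize directly by the \emph{position} $z$ of the new point inside the affine span $W$, and replace the Cayley--Menger input by the inverse-function theorem applied to the squared-distance map $\Phi_2$.

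The trade-off is this: your route is more elementary in that it needs no external characterization of Euclidean embeddability, but it pushes the work into the quantitative bookkeeping you flag (controlling $r$, the Lipschitz constant $L$ of $\Phi_2^{-1}$, and the accumulated drift $\|x_i-y_i\|_2$, all uniformly in terms of the singular values of the matrices $(y_{k+1}-y_i)_i$). The paper's cube parametrization makes the ``$\varphi$ lands in $[0,\epsilon]^{k+1}$'' check a two-line norm comparison, at the price of importing the Cayley--Menger theory up front. Both arguments ultimately exploit affine independence in the same place: for you it is the invertibility of the Jacobian of $\Phi_2$; for the paper it is the non-vanishing of the Cayley--Menger determinants.
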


This proof follows an idea of Swanepoel \cite{swane}, where he used a similar proof to show that all infinite-dimensional Banach spaces contain arbitrarily large finite equilateral sets. Theorem 3.1 was proven by Shkarin in \cite{shkarin}, however the proof method used here is different, and we include it as it may be of independent interest.

\begin{proof}
Let $Y$ be an $n+1$ point affinely independent subset of $\ell_2$. We may assume, without loss of generality, that $Y$ is an $n+1$ point subset of $\ell_2^n$, with the first point of $Y$ being zero. Let $d_Y$ be the Euclidean metric restricted to $Y$.

Label the points of $Y$ as $p_0,p_1,\dots,p_n$. Applying the result quoted in Section 2.3, we get that, for $k = 1,\dots,n$, CMDet$(p_0,\dots,p_k)$ has sign $(-1)^{k+1}$. Since $Y$ is affinely independent, no three of its points are collinear. Hence, by the strict convexity of $\ell_2$ we get that the metric space $(Y,d_Y)$ is concave in the sense of Section 2.1. 

Let $Q$ be a set of $n+1$ distinct points $q_0,\dots,q_n$ and consider a symmetric function $d:Q\times Q \rightarrow \mathbb{R}$, such that $d(q_i,q_i) = 0$ for each $i$. Then, since $(Y,d_Y)$ is concave, there is an $\epsilon > 0$ such that if $|d(q_i,q_j) - d_Y(p_i,p_j)| < \epsilon$ for all $i,j$, then $d$ is a metric on the set $Q$. Moreover, by the continuity of the determinant operation, if $\epsilon$ is sufficiently small this metric also satisfies that the sign of CMDet$(q_0,\dots,q_k)$ is $(-1)^{k+1}$ for $k=1,\dots,n$.

Suppose $Z$ is a Banach space on $\mathbb{R}^n$ with norm $\|.\|_Z$ that satisfies $\|x\|_Z \leq \|x\|_2 \leq (1+\delta_Y) \|x\|_Z$, where we choose $\delta_Y$ later. Let $\{e_1,\dots,e_n\}$ be the standard basis for $\mathbb{R}^n$. We will construct an isometry $f:Y\rightarrow Z$ by inductively defining the sequence $f(p_i)$. Begin by setting $f(p_0) = 0$ and $f(p_1) = \frac{d_Y(p_0,p_1)}{\|e_1\|_Z} e_1$, and suppose that for some $1 \leq k < n$ we have defined $f(p_0),\dots,f(p_k)$, and that $f(p_i) \in $ span$\{e_1,\dots,e_{i}\}$.

Fix $\epsilon_0,\dots,\epsilon_k \in [0,\epsilon]$, and set $\boldsymbol{\epsilon} = (\epsilon_0,\dots,\epsilon_k)$. We now define a metric space $Q(\boldsymbol{\epsilon})$ consisting of $k+2$ distinct points $q_0,\dots,q_{k+1}$ with metric $d = d_{\boldsymbol{\epsilon}}$ defined as follows:
\begin{itemize}
	\item $d(q_i,q_j) = \|f(p_i) - f(p_j)\|_2$ for each $i,j \in \{0,\dots, k\}$
	\item $d(q_i,q_{k+1}) = \|p_i - p_{k+1}\|_2 + \epsilon_i$ for each $i \in \{0,\dots,k\}$
\end{itemize}
Provided that $\delta_Y$ is sufficiently small, we have that $|d(q_i,q_j) - d_Y(p_i,p_j)| < \epsilon$ for all pairs $i,j$. Hence, by choice of $\epsilon$, $d$ is indeed a metric on $Q(\boldsymbol{\epsilon})$ and by Theorem 2.3, there is an isometric embedding $G: Q(\boldsymbol{\epsilon}) \rightarrow \ell_2^{k+1}$, onto an affinely independent set. After a series of suitable reflections we may assume that $G(q_i) = f(p_i)$ for $0 \leq i \leq k$. We then set $g(\boldsymbol{\epsilon}) = G(q_{k+1})$. Note that there are two choices for $g(\boldsymbol{\epsilon})$, depending on the sign of the $e_{k+1}$ co-ordinate. By fixing a choice, we obtain a continuous function $g:[0,\epsilon]^{k+1} \rightarrow \ell_2^{k+1}$.

Now define $\varphi:[0,\epsilon]^{k+1} \rightarrow [0,\epsilon]^{k+1}$ by $\varphi(\epsilon_0,\dots,\epsilon_k) = (y_0,\dots,y_k)$ by $$y_i = \|p_{k+1} - p_i\|_2 + \epsilon_i - \|g(\boldsymbol{\epsilon}) - f(p_i)\|_Z \, . $$

We claim that, for $\delta_Y$ sufficiently small, $\varphi$ is well defined. To see that $y_i \geq 0$, we note that $y_i \geq \|p_{k+1} - p_i\|_2 + \epsilon_i - \|g(\boldsymbol{\epsilon}) - f(p_i)\|_2 = \|p_{k+1} - p_i\|_2 + \epsilon_i - \|G(q_{k+1}) - G(q_i)\|_2 = 0$, where we have used that $\|x\|_2 \leq \|x\|_Z$ for all $x \in \mathbb{R}^n$.

On the other hand, we have that $y_i \leq \|p_{k+1} - p_i\|_2 + \epsilon_i - \frac{1}{1+\delta_Y} \|g(\boldsymbol{\epsilon}) - f(p_i)\|_2 = d(q_{k+1},q_i) - \frac{1}{1+\delta_Y} \|G(q_{k+1}) - G(q_i)\|_2 = \frac{\delta_Y}{\delta_Y + 1} d(q_{k+1},q_i) < \epsilon$, provided $\delta_Y$ is sufficiently small, where we have used that $\|x\|_Z \leq \frac{1}{1+\delta_Y} \|x\|_2$ for all $x \in \mathbb{R}^n$.

So, $\varphi$ is a continuous map from a compact subset of $\mathbb{R}^n$ to itself, and by the Brouwer fixed point theorem, it has a fixed point. Setting $f(p_{k+1})$ to be $g(\alpha)$, where $\alpha$ is the fixed point, gives the necessary extension. This completes the inductive step and hence the proof.
\end{proof}

\subsection{Some Examples} As promised in the introduction we will look at some examples to show that there is no possible way that the localised technique used in the proof of Theorem 3.1 can work for affinely dependent sets. These examples are essentially based on a modification of an idea stated in Section \ref{prereq}, where we discussed the notion of metric midpoints. It is evident that if we have a metric midpoint $z$ of two points $x,y$ and $f$ is an isometry, then $f(z)$ is a metric midpoint of $f(x),f(y)$. Moreover, if we have an embedding of two points $x,y \in X$ into $Y$, a space where metric midpoints are unique, then if we wish to extend the embedding to a metric midpoint of $x,y$ there is only one possible place that the metric midpoint can embed to.

So, in the Banach space setting, suppose we have strictly convex spaces $X,Y$ and an isometry $f:A \rightarrow Y$ where $A \subset X$ is a finite subset. Then given $a,b \in A$, and we wish to extend the isometry to the set $A^\prime = A \cup \{\frac{a+b}{2}\}$, this extension \emph{must} map the point $\frac{a+b}{2}$ to the point $\frac{f(a)+f(b)}{2}$.

This generalises to the case of general affine combinations of two points, namely $\alpha a + (1-\alpha)b$ has to map to $\alpha f(a) + (1-\alpha) f(b)$, for any $\alpha \in \mathbb{R}$.

Using this idea, we can show various obstructions to possible extensions of Theorem \ref{thm:fixed}. In the following we will say that $\tilde{f}$ is \emph{an extension} of $f$ if $\tilde{f}$ is defined on a larger set than $f$, and $f = \tilde{f}|_{\text{dom} f}$.
\begin{thm}\label{thm:counter}
\begin{enumerate}
	\item There is a 3 points subset $Y$ of $\ell_2^2$ and a point $y \in \ell_2^2$, such that for all $\epsilon > 0$ there is a Banach space $X$ with $d(X,\ell_2^2) \leq 1+\epsilon$, and an embedding $f:Y \rightarrow X$ which is isometric, but that there is no extension $\tilde{f}: Y \cup \{y\} \rightarrow X$ which remains isometric.
	\item There is a three-point subset $Y$ of $\ell_2^2$ such that for any strictly convex Banach space $X$ that contains no subspace isometric to $\ell_2^2$, and for every isometric embedding $f:Y \rightarrow X$ there is a point $y \in \ell_2^2$ such that no extension $\tilde{f}:Y \cup \{y\} \rightarrow X$ remains isometric.
	\item Suppose that $X$ is a finite-dimensional, strictly convex Banach space that contains no 2-dimensional subspace isometric to $\ell_2^2$. Then there is a finite subset $Y$ of $\ell_2^2$ that does not embed into $X$.
\end{enumerate}
\end{thm}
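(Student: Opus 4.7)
The unifying idea across the three parts is the midpoint-preservation principle emphasised just before the theorem: in a strictly convex $X$, an isometric embedding $f$ is forced to send $\tfrac{1}{2}(p+q)$ to $\tfrac{1}{2}(f(p)+f(q))$, and more generally any affine combination of two points is rigid; the failure of the parallelogram identity in non-Hilbert two-dimensional subspaces will then supply the concrete obstructions.

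For Part (1), I would take $Y = \{a,b,c\} = \{(0,0),(1,0),(0,1)\}$ and $y = -b = (-1,0)$. Because $a$ is the midpoint of $b$ and $y$ in $\ell_2$, any isometric extension $\tilde f$ into a strictly convex $X$ must set $\tilde f(y) = -f(b)$, and the remaining constraint $\|\tilde f(y)-f(c)\|_X = \sqrt{2}$ reduces to $\|f(b)+f(c)\|_X = \sqrt{2}$. Combined with $\|f(b)\|_X = \|f(c)\|_X = 1$ and $\|f(b)-f(c)\|_X = \sqrt{2}$, both forced by the embedding of $Y$, this is precisely the parallelogram identity at the pair $(f(b),f(c))$. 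I would construct $X$ by specifying, as in Section \ref{prereq}, a smooth strictly convex symmetric curve $\Gamma\subset\mathbb{R}^2$ passing through $(\pm 1,0)$, $(0,\pm 1)$ and $\pm(1,-1)/\sqrt{2}$ but with $\pm(1,1)/\sqrt{2}$ strictly outside $\Gamma$. The resulting Minkowski norm then has $\|e_1\|_X = \|e_2\|_X = 1$, $\|e_1-e_2\|_X = \sqrt{2}$, and $\|e_1+e_2\|_X > \sqrt{2}$, which breaks the needed identity, and for small enough perturbations of the Euclidean circle one has $d(X,\ell_2^2) \le 1+\epsilon$; the map $a\mapsto 0, b\mapsto e_1, c\mapsto e_2$ is then an isometric embedding of $Y$ into $X$ with no isometric extension to $y$.

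For Part (2), I keep the same $Y$. Let $X$ be strictly convex with no two-dimensional subspace isometric to $\ell_2^2$ and $f:Y\to X$ an isometric embedding; write $u=f(b)-f(a)$, $v=f(c)-f(a)$, and $W=\mathrm{span}(u,v)$. Strict convexity precludes equality in the triangle inequality, so the image of $Y$ is affinely independent and $W$ is two-dimensional, hence by hypothesis not isometric to $\ell_2^2$. Arguing by contradiction, suppose every $y\in\ell_2^2$ admits an isometric extension of $f$. Running through the finite collection of midpoint-forced points, those of the form $2p-q$ or $\tfrac{1}{2}(p+q)$ with $p,q\in Y$, yields identities such as $\|u+v\|_X = \sqrt{2}$ and $\|2u-v\|_X = \|u-2v\|_X = \sqrt{5}$. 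The main obstacle, where I would concentrate the argument, is to bootstrap these into identities $\|\alpha u+\beta v\|_X = \sqrt{\alpha^2+\beta^2}$ for $(\alpha,\beta)$ in a dense subset of $\mathbb{Q}^2$: once a midpoint-forced extension has been built, the resulting four-point isometric embedding restricts on three-point sub-configurations that are $\ell_2^2$-isometric to $Y$, and applying the hypothesis (transported by the ambient $\ell_2^2$-symmetry) to these restrictions generates new midpoint-forced identities which can be iterated. Appealing to continuity of the norm then gives $\|\alpha u+\beta v\|_X = \sqrt{\alpha^2+\beta^2}$ for all real $(\alpha,\beta)$, so that $W\cong\ell_2^2$, contradicting the hypothesis.

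For Part (3) I would argue by compactness. Suppose, for contradiction, that every finite subset of $\ell_2^2$ embeds isometrically into $X$, and fix an affinely independent triple $\{a,b,c\}\subset\ell_2^2$ together with a nested sequence $Y_1\subset Y_2\subset\cdots$ of finite subsets, each containing the triple, whose union is a countable dense subset $D\subset\ell_2^2$. Choose isometric embeddings $f_n:Y_n\to X$ normalised by $f_n(a)=0$. Because $X$ is finite-dimensional and $\{f_n(y)\}_n$ is bounded for each $y\in D$ (its norm is controlled by $\|y-a\|_2$), a standard diagonal extraction yields a subsequence along which $f_n(y)$ converges for every $y\in D$. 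The pointwise limit is isometric on $D$ and, by uniform continuity, extends to an isometric embedding $f_\infty:\ell_2^2\to X$. Strict convexity forces $f_\infty$ to preserve midpoints and hence, by iteration and continuity, all affine combinations, so $f_\infty$ is the restriction of an affine map. Its image is then a two-dimensional linear subspace of $X$ isometric to $\ell_2^2$, contradicting the hypothesis.
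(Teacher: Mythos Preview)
Your arguments for Parts (1) and (3) are sound. Part (3) is essentially the paper's proof (diagonal extraction, continuous extension, midpoint rigidity $\Rightarrow$ affinity). Part (1) is a different but perfectly valid construction: instead of the paper's ``bad sector'' perturbation of the circle near argument $\pi/4$ applied to the triple $\{(0,0),(1,1),(1,0)\}$ with $y=(1/2,0)$, you perturb the circle inward near $\pm(1,1)/\sqrt{2}$ so as to break the parallelogram identity at $(e_1,e_2)$ while preserving the three distances in the triple $\{0,e_1,e_2\}$. Both constructions exploit midpoint rigidity in a strictly convex space; yours is perhaps more transparent because the obstruction is exactly the failure of one parallelogram identity.

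Part (2), however, has a genuine gap. The contradiction hypothesis you set up is: \emph{for this fixed} $f:Y\to X$, every $y\in\ell_2^2$ admits an isometric extension $\tilde f_y:Y\cup\{y\}\to X$. When you pass to a three-point sub-configuration of a four-point image $\tilde f_y(Y\cup\{y\})$, you obtain a \emph{different} isometric embedding $g:Y\to X$ (after transporting by an $\ell_2^2$-isometry in the domain). The contradiction hypothesis says nothing about extendability of $g$, so your iteration cannot proceed. The ``ambient $\ell_2^2$-symmetry'' acts on the domain only; it does not identify $g$ with $f$ on the $X$ side.

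The repair is much simpler than the iteration you attempt, and you already have the tool in your opening paragraph: affine rigidity holds for \emph{every} $\alpha\in\mathbb{R}$, not only for $\alpha\in\{1/2,2\}$. Take the one-parameter family $y_\alpha=\alpha b+(1-\alpha)c$ for $\alpha\in\mathbb{R}$. Each $y_\alpha$ lies on the line through $b$ and $c$, so the extension is forced to send $y_\alpha$ to $\alpha u+(1-\alpha)v$, and comparing with the distance to $f(a)=0$ gives
\[
\|\alpha u+(1-\alpha)v\|_X=\|y_\alpha\|_2=\sqrt{\alpha^2+(1-\alpha)^2}\qquad(\alpha\in\mathbb{R}).
\]
Writing an arbitrary $(s,t)$ with $s+t\neq 0$ as $(s+t)\bigl(\alpha,1-\alpha\bigr)$ with $\alpha=s/(s+t)$ and using homogeneity (and continuity for $s+t=0$) yields $\|su+tv\|_X=\sqrt{s^2+t^2}$ for all $(s,t)$, so $\mathrm{span}(u,v)\cong\ell_2^2$, the desired contradiction. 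This is exactly the paper's argument, and it obviates the bootstrap entirely.
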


The first part of Theorem 3.2 is saying that the proof of Theorem 3.1 does not admit an obvious extension to the case of affine dependence.

The second part is saying that the obstruction in the first part of the theorem is, in some sense, typical. A modification of the first example allows us to block extensions of the type used in the proof of Theorem 3.1 to any strictly convex space with no subspaces isometric to $\ell_2^2$.

The third part has two interpretations. The first one is saying that if $X$ is a strictly convex finite-dimensional Banach space with no 2-dimensional subspace isometric to $\ell_2^2$, then there is a least $M$ such that there is an $M$-point subset of $\ell_2^2$ that does not embed isometrically into $X$. This is an isometric invariant of the space. For the second interpretation, note that by Theorem 3.1, for every $n \in \N$ there is an $N \in \N$ such that every affinely independent subset of $\ell_2^n$ embeds into every $N-$dimensional Banach space sufficiently close to $\ell_2^N$ (indeed, $N=n+1$ in this case.) Part (3) shows that this fails without the condition of affine independence, even in the case $n=2$.

\begin{proof}[Proof of \ref{thm:counter} part (1)]
Let $Y$ be the subset $\{(0,0),(1,1),(1,0)\}$ of $\ell_2^2$ and let $y = (0.5,0)$.
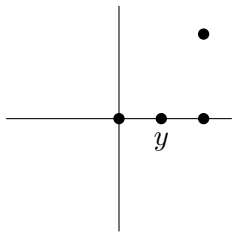
\begin{figure}[ht]

\begin{tikzpicture}[scale=1.5]
	\draw (1,0) -- (-1,0);
	\draw (0,-1) -- (0,1);
	
	\fill[black] (0,0) circle (0.05cm);
	\fill[black] (0.75,0.75) circle (0.05cm);
	\fill[black] (0.75,0) circle (0.05cm);
	
	\fill[black] (0.375,0) circle (0.05cm);
	
	\draw (0.375,-0.2) node {$y$};

\end{tikzpicture}
\caption{The subset $Y$ and the point $\{y\}$}
\end{figure}

We now construct a Banach space $X$ for which $d(X,\ell_2^2)$ is small. Fix some $\epsilon > 0$. We construct a symmetric convex curve $\Gamma$ as follows: Let $x = (x_1,x_2) \in \R^2$.
\begin{itemize}
	\item If the argument of $x$ is not in $[\pi/4 - \epsilon, \pi/4 + \epsilon] \cup [5 \pi/4 - \epsilon, 5\pi/4 + \epsilon]$, and $x \in S_{\ell_2^2}$, then $x \in \Gamma$.
	\item If the argument of $x$ is in $[\pi/4 - \epsilon, \pi/4+\epsilon]$ we set $x \in \Gamma$ if $C(\epsilon) (x_1^{3/2} + x_2^{3/2})^{2/3} = 1$, where $C(\epsilon)$ is chosen such that the curve $\Gamma$ is continuous. We think of this sector of the unit disc as the 'bad' sector.
	\item If the argument of $x \in [5 \pi/4 - \epsilon, 5 \pi/4 + \epsilon]$ we say $x \in \Gamma$ just when $-x \in \Gamma$.
\end{itemize}
The norm that this Banach space is equipped with is evidently strictly convex. Moreover, $d(X,\ell_2^2) \rightarrow 0$ as $\epsilon \rightarrow 0$.

We now embed the subset $Y$ of $\ell_2^2$ into the Banach space $X$ in the following way: send $(0,0)$ to $(0,0)$ and $(1,0)$ to $(1,0)$. Then, if $\epsilon$ is sufficiently small, we can embed $(1,1)$ to some point $z$ whose argument is in $[\pi/4 - \epsilon, \pi/4+\epsilon]$ and which lies on $\sqrt{2} S_X$. Thus $z$ lies in the 'bad' sector, ie, the sector where the norm is not equal to the $\ell_2^2$ norm.

By the strict convexity of $X$, and the remarks at the beginning of the section concerning midpoints, the point $y$ must map to the point $(0.5,0)$ in $X$.

We now just need to observe that provided $\epsilon$ is small enough the vector in $X$ joining $(0.5,0)$ to $z$ is in the 'good' sector, ie, will have the $\ell_2$ norm. However, the $\ell_2$ norm of the vector joining $(0.5,0)$ to $z$ is not the $\ell_2$ norm of the vector joining $(0.5,0)$ to $(1,1)$ (this can be seen geometrically: the argument of the vector joining $(1,0)$ to the point $z$ is not $\pi/2$.)
\end{proof}
\begin{proof}[Proof of \ref{thm:counter} part (2)]
Let $Y$ be the three-point subset of $\ell_2^2$ given by $\{(0,0), (1,0), (0,1)\}$ and let $X$ be a strictly convex Banach space satisfying the assumptions in the theorem. Let $f$ be an isometric embedding of $Y$ into $X$. Note that, without loss of generality, we may assume $f(0,0) = (0,0)$.

Now, suppose that for each $y \notin Y$ we can extend the isometric embedding $f:Y \rightarrow X$ to an embedding $\tilde{f}:Y \cup \{y\} \rightarrow X$. In particular, we can take $y = \alpha(1,0) + (1-\alpha) (0,1)$. By the remarks preceding this theorem, the extension $\tilde{f}$ must map $y$ to $\alpha f(1,0) + (1-\alpha) f(0,1)$ for any $\alpha \in \mathbb{R}$. Thus we have that $\|\alpha f(1,0) + (1-\alpha) f(0,1)\| = \sqrt{\alpha^2 + (1-\alpha)^2}$. By homogeneity it follows that $f(1,0)$ and $f(0,1)$ span a subspace isometric to $\ell_2^2$, contradicting the assumptions in the theorem.
\end{proof}
\begin{proof}[Proof of \ref{thm:counter} part (3)]
Suppose $X$ is a finite-dimensional, strictly convex Banach space with no subspace isometric to $\ell_2^2$, and suppose that every finite subset of $\ell_2^2$ embeds into $X$.

Let $s_1,s_2,\dots$ be a dense subset of $\ell_2^2$ with $s_1 = 0$. For each $n$ let $f_n:\{s_1,\dots,s_n\} \rightarrow X$ be an isometric embedding, such that (without loss of generality) $f_n(s_1) = s_1$.

By a diagonal argument, we may assume that $\lim_n f_n(s_j)$ converges for each $j$, and call this limit $f(s_j)$.By the continuity of the norm, $f$ extends to an isometric embedding 	$\hat{f} : \ell_2^2 \rightarrow X$, which is linear by the remarks preceding the theorem. This contradicts the assumptions in the theorem.
\end{proof}

\section{Some Partial Results for Question 1.1}
\subsection{Some Positive Results}
We start this section by presenting some positive results for Question 1.1, which will lead to further partial answers to Question 1.2.

It is trivial that finite-dimensional spaces satisfy the conclusion of Question 1.1 (via a compactness argument). It is also trivial that spaces that are isometrically universal for all finite metric spaces satisfy the conclusion of Question 1.1 (eg, $\ell_\infty$, $C[0,1]$.)

A key result for less obvious examples is a result due to Ball in \cite{ball}:
\begin{thm}
If $1 \leq p \leq \infty$, and $Y$ is an $n$ point subset of $L_p$, then $Y$ isometrically embeds into $\ell_p^{\binom{n}{2}}$.
\end{thm}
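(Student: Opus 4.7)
The plan is to reduce the isometric embedding problem to a moment-matching question in finite dimensions and finish via Carath\'eodory's theorem. First, dispatch $p = \infty$ separately: the Fr\'echet embedding $y \mapsto (d(y, y_i))_{i=1}^n$ isometrically places any $n$-point metric space into $\ell_\infty^n$, and since $\binom{n}{2} \geq n$ for $n \geq 3$ (the cases $n \leq 2$ being trivial), we are done in this case. So assume $1 \leq p < \infty$.

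Translate so that $y_1 = 0$ and regard the tuple $(y_2, \ldots, y_n)$ as a single measurable map $\mathbf{y} \colon \Omega \to \R^{n-1}$ with pushforward $\nu = \mathbf{y}_* \mu$. The key point is that every pairwise $L_p$-distance is encoded as a moment of $\nu$:
$$ \|y_i\|_p^p = \int_{\R^{n-1}} |u_{i-1}|^p \, d\nu(u), \qquad \|y_i - y_j\|_p^p = \int_{\R^{n-1}} |u_{i-1} - u_{j-1}|^p \, d\nu(u), $$
for $2 \leq i \leq n$ and $2 \leq i < j \leq n$ respectively, amounting to $(n-1) + \binom{n-1}{2} = \binom{n}{2}$ scalar constraints. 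Assemble them into a moment map $M \colon \R^{n-1} \to \R_{\geq 0}^{\binom{n}{2}}$ and set $b := \int M \, d\nu$. Then $b$ lies in the convex cone $\cone(M(\R^{n-1})) \subset \R^{\binom{n}{2}}$.

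The conic form of Carath\'eodory's theorem in $\R^{\binom{n}{2}}$ then yields vectors $v^{(1)}, \ldots, v^{(\binom{n}{2})} \in \R^{n-1}$ and weights $w_1, \ldots, w_{\binom{n}{2}} \geq 0$ with $b = \sum_{k=1}^{\binom{n}{2}} w_k \, M(v^{(k)})$. Set $x_1 = 0 \in \ell_p^{\binom{n}{2}}$ and, for $2 \leq i \leq n$, $(x_i)_k = w_k^{1/p} (v^{(k)})_{i-1}$; a direct computation gives $\|x_i - x_j\|_p^p = \sum_k w_k |v^{(k)}_{i-1} - v^{(k)}_{j-1}|^p = \|y_i - y_j\|_p^p$, with the analogous matching for the norms $\|x_i\|_p$. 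This is the desired isometric embedding.

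The main obstacle I anticipate is justifying that $b$ genuinely lies in the (unclosed) conic hull of $M(\R^{n-1})$ rather than only its closure: this is a standard but delicate point, resolved either by invoking a measure-theoretic version of Carath\'eodory, or by approximating $\nu$ by finitely supported measures and extracting a convergent subsequence of the resulting $(v^{(k)}, w_k)$ tuples. The latter is feasible because the finiteness of $b$ prevents the atoms $v^{(k)}$ carrying nontrivial weight from escaping to infinity.
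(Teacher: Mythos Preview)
The paper does not prove this statement at all: it is quoted as a known result of Ball \cite{ball} and used as a black box in Theorems~4.2 and~4.5. So there is no ``paper's own proof'' to compare against.

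Your argument is essentially Ball's original one, and it is correct. The reduction to a moment vector $b \in \R^{\binom{n}{2}}$ and the use of the conic Carath\'eodory theorem is exactly the right mechanism, and your coordinate count $(n-1)+\binom{n-1}{2}=\binom{n}{2}$ is accurate. The reconstruction of the embedding from the atoms $(v^{(k)},w_k)$ is also correct.

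The one point that needs care, as you rightly flag, is that $\int M\,d\nu$ a priori lies only in the closed cone generated by $M(\R^{n-1})$, not necessarily the cone itself. The cleanest way to close this is the second route you sketch: approximate the $y_i$ by simple functions, so that $\nu$ becomes finitely supported and $b$ is manifestly a finite nonnegative combination of values of $M$; Carath\'eodory then applies directly and yields an isometric copy of the approximants in $\ell_p^{\binom{n}{2}}$. This gives $(1+\epsilon)$-distortion embeddings of the original $Y$ into the fixed finite-dimensional space $\ell_p^{\binom{n}{2}}$ for every $\epsilon>0$, and a straightforward compactness argument (the images stay in a fixed ball) produces a limiting isometric embedding. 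This is slightly more robust than trying to control the atoms $(v^{(k)},w_k)$ directly along an approximating sequence, since those need not be unique and can jump around.
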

This gives that $L_p$ spaces (and hence $\ell_p$) satisfy the conclusions of Question \ref{quest:u}.

Summing up, we get that: 
\begin{thm}
The following Banach spaces satisfy the conclusions of Question \ref{quest:u}:
\begin{itemize}
	\item All finite-dimensional Banach spaces.
	\item Any Banach space that is isometrically universal for all finite metric spaces.
	\item $L_p$ spaces.
\end{itemize}
\end{thm}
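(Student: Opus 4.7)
The statement collects three classes of spaces, and the proof breaks into three independent arguments. The second bullet is immediate from the definition of isometric universality: every finite metric space, and in particular every $Y \in \mathcal{Y}$, already embeds isometrically into such an $X$. The third bullet reduces to the first via Ball's theorem (Theorem 4.1). So the real content lies in the compactness argument for the first bullet.

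For the first bullet, let $X$ be finite-dimensional and let $Y = \{y_0, \dots, y_m\}$ be a finite metric space admitting $(1+\epsilon_n)$-distortion embeddings $f_n : Y \to X$ with $\epsilon_n \to 0$. Using the definition of distortion, I would rescale each $f_n$ by a positive scalar so that
$$ d_Y(y,y') \leq \|f_n(y) - f_n(y')\|_X \leq (1+\epsilon_n)\, d_Y(y,y') \quad \text{for all } y,y' \in Y, $$
and then translate so that $f_n(y_0) = 0$. This forces $\|f_n(y_i)\|_X \leq 2\, d_Y(y_0, y_i)$ for all large $n$, so each sequence $(f_n(y_i))_n$ lies in a bounded subset of the finite-dimensional space $X$. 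A diagonal extraction produces a subsequence along which $f_n(y_i) \to f(y_i)$ for every $i$, and continuity of the norm gives $\|f(y_i) - f(y_j)\|_X = d_Y(y_i, y_j)$, so $f$ is the desired isometric embedding.

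For the third bullet, suppose $Y$ is an $n$-point metric space admitting $(1+\epsilon_k)$-distortion embeddings $f_k : Y \to L_p$ with $\epsilon_k \to 0$. Each image $f_k(Y)$ is itself an $n$-point subset of $L_p$, so by Ball's theorem there is an isometric embedding $\iota_k : f_k(Y) \to \ell_p^{\binom{n}{2}}$. The compositions $\iota_k \circ f_k$ realise $Y$ with distortion $1 + \epsilon_k$ inside the \emph{fixed} finite-dimensional space $\ell_p^{\binom{n}{2}}$, and applying the first bullet to this space yields an isometric embedding of $Y$ into $\ell_p^{\binom{n}{2}} \subset L_p$.

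I do not expect any serious obstacle; this is essentially a bookkeeping proof. The only delicate point, which must be recognised rather than worked around, is that Ball's bound $\binom{n}{2}$ depends only on the cardinality of $Y$ and not on the approximation parameter $k$. This is precisely what allows the $L_p$ case to be reduced to a compactness argument in a single fixed finite-dimensional target, and without it the third bullet would not follow from the first.
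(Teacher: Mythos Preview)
Your argument is correct and follows essentially the same route as the paper: the first two bullets are dispatched directly (the paper simply calls them trivial, while you spell out the diagonal-extraction compactness argument), and the third is reduced to the first via Ball's theorem, using that the target dimension $\binom{n}{2}$ is independent of the approximation parameter. The only cosmetic point is that your final inclusion $\ell_p^{\binom{n}{2}} \subset L_p$ tacitly assumes $L_p$ is infinite-dimensional; the paper makes this explicit by noting that a finite-dimensional $L_p$ space is already covered by the first bullet.
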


\begin{proof}
Only the third bullet point needs justification. Suppose $X$ is an $L_p$ space. By the first bullet point we may assume that $X$ is infinite-dimensional, and hence contains $\ell_p^N$ isometrically for each $N \in \mathbb{N}$. Fix an $n$-point metric space $Y$, and suppose that for each $\epsilon > 0$ there is an embedding of $Y$ into $X$ with distortion $1+\epsilon$, which we denote $Y_\epsilon$. By Theorem 4.1, we have that $Y_\epsilon$ is isometric to a subset of $\ell_p^{\binom{n}{2}}$, ie, there is an embedding with distortion $1+\epsilon$ of $Y$ into $\ell_p^{\binom{n}{2}}$. By compactness, $\ell_p^{\binom{n}{2}}$ contains an isometric copy of $Y$, and since $\ell_p^{\binom{n}{2}}$ embeds isometrically into $X$ we are done.
\end{proof}

An old result of Fr\'{e}chet states that every $n$-point metric space embeds into $\ell_\infty^n$. Thus the class of Banach spaces in the second bullet point of Theorem 4.2 includes spaces that contain $\ell_\infty^n$ isometrically for all $n$. It is thus natural to consider the class of Banach spaces that for each $\epsilon > 0, n \in \N$ contain a subspace $Z$ such that $d(Z,\ell_\infty^n) < 1 + \epsilon$. Then Question 1.1 asks whether such a space is isometrically universal for finite metric spaces. The answer to this question is negative in general - see Example 4.10. However, the following result shows that if a Banach space contains almost isometric copies of $\ell_\infty^n$ for every $n$ then it \emph{does} contain isometric copies of every finite concave metric space. The method resembles the proof of Theorem 3.1, where we construct a family of embeddings depending on some parameter and then use the Brouwer fixed point theorem to show that one of the embeddings is isometric.
\begin{thm}\label{thm:cotype}
Suppose $X$ is a Banach space which contains, for each $n \in \N, \epsilon > 0$ a subspace $Z$ such that $d(Z,\ell_\infty^n) \leq 1+\epsilon$. Suppose that $Y$ is a finite metric space which is concave. Then $X$ contains an isometric copy of $Y$.  
\end{thm}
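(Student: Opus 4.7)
The plan is to mimic the Brouwer fixed point argument from Theorem \ref{thm:fixed}, but with Fr\'{e}chet's embedding replacing the Cayley--Menger criterion. Write $Y = \{y_0, \dots, y_n\}$ and $d_{ij} = d_Y(y_i, y_j)$; the Fr\'{e}chet map $F : y_i \mapsto (d_{i0}, \dots, d_{in})$ sends $Y$ isometrically into $\ell_\infty^{n+1}$. The crucial property I would exploit is that concavity of $Y$ yields a uniform margin $\eta > 0$ with $|d_{ik} - d_{jk}| \leq d_{ij} - \eta$ for every triple of distinct indices $i,j,k$ (apply the strict triangle inequality twice and take the minimum over the finite set of triples). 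Thus the maximum defining $\|F(y_i) - F(y_j)\|_\infty$ is attained only in coordinates $i$ and $j$, with a strict gap $\eta$ over every other coordinate.

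Fix $\epsilon \in (0, \eta)$ and a $\delta > 0$ to be chosen small at the end (depending on $\epsilon$ and the diameter $D$ of $Y$). By hypothesis, pick a subspace $Z \subset X$ with $d(Z, \ell_\infty^{n+1}) \leq 1+\delta$ and an isomorphism $T : \ell_\infty^{n+1} \to Z$ normalized so that $\frac{1}{1+\delta}\|x\|_\infty \leq \|Tx\|_Z \leq \|x\|_\infty$; write $e_k = T(e_k^*)$ for the image of the $k$-th standard basis vector. In contrast to Theorem \ref{thm:fixed}, I would \emph{not} proceed inductively on the number of placed points: extensions of a partial embedding in $\ell_\infty$ are highly non-unique and there is no analogue of the two-choices-up-to-reflection picture available for $\ell_2$. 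Instead, parameterize the whole Fr\'{e}chet embedding in one shot. For $\boldsymbol{\alpha} = (\alpha_{ij})_{i<j} \in [0,\epsilon]^{\binom{n+1}{2}}$, extended symmetrically with $\alpha_{ii}=0$, set $d'_{ij} = d_{ij}+\alpha_{ij}$ and define
\[
F_{\boldsymbol{\alpha}}(y_i) \,=\, \sum_{k=0}^{n} d'_{ik}\, e_k \;\in\; Z.
\]
Because $\epsilon < \eta$, the argmax computation above persists under the perturbation, so $\|F_{\boldsymbol{\alpha}}(y_i) - F_{\boldsymbol{\alpha}}(y_j)\|_\infty = d'_{ij}$ exactly, and hence $\|F_{\boldsymbol{\alpha}}(y_i) - F_{\boldsymbol{\alpha}}(y_j)\|_Z \in \bigl[\tfrac{d'_{ij}}{1+\delta},\, d'_{ij}\bigr]$.

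Now define $\varphi : [0,\epsilon]^{\binom{n+1}{2}} \to \R^{\binom{n+1}{2}}$ by
\[
\varphi(\boldsymbol{\alpha})_{ij} \,=\, \alpha_{ij} + d_{ij} - \|F_{\boldsymbol{\alpha}}(y_i) - F_{\boldsymbol{\alpha}}(y_j)\|_Z.
\]
At a fixed point the last two terms cancel, giving $\|F_{\boldsymbol{\alpha}^*}(y_i) - F_{\boldsymbol{\alpha}^*}(y_j)\|_Z = d_{ij}$ for every pair, i.e., an isometric embedding $F_{\boldsymbol{\alpha}^*} : Y \to Z \subset X$. Continuity of $\varphi$ is immediate. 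The upper bound on the $Z$-norm gives $\varphi(\boldsymbol{\alpha})_{ij} \geq 0$, while the lower bound gives
\[
\varphi(\boldsymbol{\alpha})_{ij} \,\leq\, \frac{\delta}{1+\delta}(d_{ij} + \alpha_{ij}) \,\leq\, \frac{\delta}{1+\delta}(D+\epsilon),
\]
which is at most $\epsilon$ once $\delta$ is chosen with $\delta D \leq \epsilon$. Then $\varphi$ maps $[0,\epsilon]^{\binom{n+1}{2}}$ continuously to itself and Brouwer supplies the fixed point. The only genuine substance beyond routine inequalities is the decision to perturb all pairwise distances globally rather than inductively; concavity is used in exactly one place, to ensure that the Fr\'{e}chet argmax is robust against these perturbations.
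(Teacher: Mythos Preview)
Your proof is correct and follows essentially the same approach as the paper's: perturb the Fr\'{e}chet embedding by parameters indexed by pairs, use concavity to guarantee the $\ell_\infty$-distance between the perturbed images equals the perturbed pairwise distance, and apply Brouwer to the resulting self-map of a cube. The only cosmetic difference is that you perturb the Fr\'{e}chet coordinates symmetrically (setting $d'_{ij}=d'_{ji}=d_{ij}+\alpha_{ij}$), whereas the paper adds $\epsilon_{ij}$ only in the lower-triangular positions; both choices give the same $\binom{|Y|}{2}$-dimensional parameter space and the same fixed-point argument.
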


The proof of this result is similar to a proof of a result due to Swanepoel.

\begin{proof}
As in the proof of Theorem 3.1, we show that given any finite metric space $Y$ that is concave there is a parameter $\epsilon$ such that if $|Y| = n$ and $d(\ell_\infty^n,Z) \leq 1+\epsilon$ then $Y$ embeds isometrically into $Z$.

Since $Y$ is concave, we may fix $\eta > 0$ such that $d(x,y) + d(y,z) - d(x,z) > 2 \eta$ for each triple $x,y,z$ of pairwise distinct points in $Y$. Suppose that $d(Z,\ell_\infty^n) \leq 1 + \delta$, and assume $Z$ is $\mathbb{R}^n$ with norm $\|.\|_Z$ satisfying $\|x\|_Z \leq \|x\|_\infty \leq (1+\delta) \|x\|_Z$, where $\delta > 0$ is to be determined.

For $1\leq j < i \leq n$ consider $0 \leq \epsilon_{ij} \leq \eta$ and set $\epsilon$ to be the vector of all the $\epsilon_{ij}$'s. Let $x_1,\dots,x_n$ be the points of $Y$ and consider the following points of $\mathbb{R}^n$: $$p_1(\epsilon) = (d(x_1,x_1), d(x_1,x_2), d(x_1,x_3), \dots , d(x_1,x_n)) $$ $$ p_2(\epsilon) = (d(x_2,x_1) + \epsilon_{2,1}, d(x_2,x_2), d(x_2,x_3), \dots , d(x_2,x_n) )$$ $$ p_3(\epsilon) = (d(x_3,x_1) + \epsilon_{3,1}, d(x_3,x_2) + \epsilon_{3,2} , d(x_3,x_3) \dots , d(x_3,x_n))$$ $$\vdots$$ $$p_n(\epsilon) = (d(x_n,x_1)+ \epsilon_{n,1}, \dots, d(x_n,x_{n-1}) + \epsilon_{n,n-1}, d(x_n,x_n))$$

(Note that $x_i \mapsto p_i(0)$ is the Fr\'{e}chet embedding of $Y$ into $\ell_\infty^n$.)

We have, for $i \neq j$, that $$\|p_i(\epsilon) - p_j(\epsilon)\|_\infty = \sup_k |d(x_i,x_k) +\epsilon_{i,k} - (d(x_j,x_k) + \epsilon_{j,k})|$$where we set $\epsilon_{kl} = 0$ for $l \geq k$. This is equal to $d(x_i,x_j) + \epsilon_{ij}$, as for $k \neq i,j$ we have $$d(x_i,x_k) - d(x_j,x_k) + \epsilon_{ik} - \epsilon_{kj} \leq d(x_i,x_j) - 2 \eta + \epsilon_{ik} - \epsilon_{jk} \leq d(x_i,x_j) \, .$$

Set $\varphi_{ij}(\epsilon) = d(x_i,x_j) + \epsilon_{i,j} - \|p_i(\epsilon)- p_j(\epsilon)\|_Z$, and $\varphi(\epsilon) = (\varphi_{ij}(\epsilon))_{1 \leq j < i \leq n}$. We have that $\varphi_{ij}(\epsilon) \geq 0$ by the computation of $\|p_i(\epsilon) - p_j(\epsilon)\|_\infty$, and the fact that the $\ell_\infty$-norm majorizes the $Z$-norm. Now, $\varphi_{i,j}(\epsilon) \leq d(x_i,x_j) + \epsilon_{ij} - \frac{1}{1+\delta} (d(x_i,x_j) + \epsilon_{ij}) = (d(x_i,x_j) + \epsilon_{ij}) \frac{\delta}{\delta+1}$. So if $\frac{\delta}{\delta+1}$ is small enough that this is less than $\eta$ then $\varphi$ sends $[0,\eta]^N \rightarrow [0,\eta]^N$ (where $N = \binom{n}{2}$).

As $\varphi$ is continuous, by the Brouwer fixed point theorem we have some $\epsilon$ such that $\varphi(\epsilon) = \epsilon$, which satisfies $\|p_i(\epsilon) - p_j(\epsilon)\|_Z = d(x_i,x_j)$ for all $i,j$.
\end{proof}

\begin{rem}
Spaces that contain $\ell_\infty^n$ almost isometrically are those with no non-trivial cotype. Examples include Tsirelson's original space and $(\oplus \ell_{p_n}^{q_n})_2$ where $p_n,q_n\rightarrow \infty$.
\end{rem}

Recall, by Dvoretzky's theorem any infinite-dimensional space that satisfies the conclusion of Question 1.1 satisfies the conclusion of Question 1.2. Theorem 4.2 thus furnishes a list of infinite-dimensional spaces that satisfy the conclusion of Question 1.2.

Moreover, combining Dvoretzky's theorem, and Theorem 4.1 (due to Ball) we can arrive at the following result.
\begin{thm}
Suppose that $X$ is an infinite-dimensional Banach space such that for some $p \in [1,\infty]$, $\ell_p^m$ isometrically embeds into $X$ for each $m$. Then $X$ satisfies the conclusion of Question 1.2.
\end{thm}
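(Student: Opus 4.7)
The plan is to combine Dvoretzky's theorem (Theorem 2.1), Ball's theorem (Theorem 4.1), and a compactness argument in a fixed finite-dimensional target, following the template of the proof of Theorem 4.2. Let $Y$ be an $n$-point subset of $\ell_2$. First I would apply Dvoretzky's theorem \emph{inside} $\ell_p^m$ rather than inside $X$ directly: for every $\epsilon > 0$ and $m = m(n,\epsilon)$ sufficiently large, $\ell_p^m$ contains a subspace $W$ with $d(W,\ell_2^n) \leq 1+\epsilon$. Since $Y$ lies in some $\ell_2^k$ with $k \leq n$, it admits a $1+\epsilon$-distortion embedding into $W$, and hence into $\ell_p^m$. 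The isometric embedding $\ell_p^m \hookrightarrow X$ guaranteed by hypothesis then deposits the image of $Y$ inside an $\ell_p^m$-subspace of $X$, which in particular sits isometrically inside $L_p$.

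Next I would invoke Ball's theorem on that image. Theorem 4.1 embeds the $n$-point image isometrically into $\ell_p^{N}$ where $N = \binom{n}{2}$, so composing with the previous map gives a $1+\epsilon$-distortion embedding of $Y$ into the \emph{fixed} finite-dimensional space $\ell_p^N$. Taking a sequence $\epsilon_k \to 0$ yields maps $f_k : Y \to \ell_p^N$ of distortion $1+\epsilon_k$, and after rescaling so that the lower distortion constant equals $1$ and translating so that $f_k(y_1) = 0$, the remaining images lie in a uniformly bounded region of $\ell_p^N$. By compactness in finite dimensions, a subsequence converges pointwise to an isometry $f : Y \to \ell_p^N$. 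Composing with the hypothesised isometric embedding $\ell_p^N \hookrightarrow X$ produces the desired isometric copy of $Y$ in $X$.

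The only subtle point, rather than a genuine obstacle, is ensuring the near-isometric image of $Y$ sits inside an $L_p$-subspace so that Ball's theorem is applicable: Dvoretzky applied to $X$ directly only tells us that $Y$ has small-distortion embeddings into $X$, but the images need not lie in any $\ell_p$-structure. Applying Dvoretzky inside $\ell_p^m \subset X$ instead resolves this. Otherwise the argument is a direct adaptation of the proof of Theorem 4.2, with $\ell_p^m$ playing the role of the ambient $L_p$ space.
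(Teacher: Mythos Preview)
Your proposal is correct and follows essentially the same route as the paper: apply Dvoretzky inside $\ell_p^m$ to obtain $(1+\epsilon)$-distortion copies of $Y$ in $\ell_p$, push these down to the fixed space $\ell_p^{\binom{n}{2}}$ via Ball's theorem, extract an isometric limit there by compactness, and then use the hypothesis to place $\ell_p^{\binom{n}{2}}$ inside $X$. The only cosmetic difference is that the paper treats $p=\infty$ separately (via Fr\'echet's embedding, making $X$ universal for finite metric spaces), whereas your argument handles all $p$ uniformly.
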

\begin{proof}
If $p = \infty$ in the statement of the theorem, then $X$ is universal for all finite metric spaces, and thus we are done. The proof of the case $p \in [1,\infty)$ is very similar to the proof of the third bullet point of Theorem 4.2, however we spell it out for clarity. Let $Y$ be an $n$-point subset of $\ell_2$, which without loss of generality we may assume is an $n$-point subset of $\ell_2^n$, and let $\epsilon > 0$. By Dvoretzky's theorem there is some $N$ such that $\ell_2^n$ embeds into $\ell_p^N$ with distortion $1+\epsilon$. So $Y$ embeds into $\ell_p^N$ with distortion $1+\epsilon$ and by Ball's result it embeds into $\ell_p^{\binom{n}{2}}$ with distortion $1+\epsilon$. By compactness there is an isometric embedding of $Y$ into $\ell_p^{\binom{n}{2}}$, and hence into $X$.
\end{proof}

\begin{rem}
We note that spaces of the type in the previous theorem include infinite-dimensional $L_p$ spaces and any $\ell_p$ direct sum of a sequence of Banach spaces. The latter is of note as many examples in the next section are of this type.
\end{rem}

\subsection{Some Negative Results:} In this section we construct some spaces that do not satisfy the conclusion of Question 1.1. The significance of this statement is that any attempt to prove that "every infinite-dimensional Banach space satisfies the conclusion of Question 1.2" can not prove the stronger statement "every infinite-dimensional Banach space satisfies the conclusion of Question 1.1".

\begin{example}
Let $X$ be the space $X = (\oplus_{n=1}^\infty \ell_{1+1/n})_2$. $X$ fails to satisfy the conclusions of Question 1.1. The space $X$ is evidently strictly convex, and thus has unique metric midpoints. Now consider $S = \{(0,0), (1,0), (0,1) , (1,1)\} \subset \ell_1^2$. This is a subset such that both $(1,0)$ and $(0,1)$ are metric midpoints of $(0,0)$ and $(1,1)$. Hence  $S$ does not isometrically embed into $X$. However, the formal identity mapping from $\ell_{1+1/n}^2$ to $\ell_1$ has norm $2^{1 - 1/(1-1/n))} \approx 2^{1/n}$, which converges to 1 as $n \rightarrow \infty$, so there exist $1+\epsilon$ distortion embeddings of $S$ into $X$ for each $\epsilon > 0$.
\end{example}

Note that the space $X$ evidently satisfies the conclusion of Question 1.2 as it contains a copy of $\ell_2$ isometrically. (If we switched 2 for $p$ in the definition of $X$ we would still have that $X$ satisfied the conclusion of Question 1.2 by Remark 4.6.)

We can extend the construction of the space $X$ to give a class of spaces that fail the conclusion of Question 1.1 as follows:
\begin{prop}\label{prop:struni}
Suppose that $X$ is a strictly convex Banach space that is not uniformly convex. Then it does not satisfy the conclusion of Question \ref{quest:u}.
\end{prop}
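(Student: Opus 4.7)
The plan is to exploit the failure of uniform convexity to build, for the given $X$, a $4$-point metric space $M$ that admits $(1+\epsilon)$-distortion embeddings into $X$ for every $\epsilon > 0$ yet no isometric embedding. Morally this is an upgrade of Example 4.7: strict convexity still forces the collision of the two midpoints that obstructed Example 4.7, but the failure of uniform convexity produces near-midpoints a definite distance apart, and we use these to build the approximate embedding.

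First I would unpack the hypothesis. Since $X$ is not uniformly convex, there exist $\epsilon_0 > 0$ and sequences $u_n, v_n \in S_X$ satisfying $\|u_n - v_n\| \geq \epsilon_0$ and $\|u_n + v_n\| \to 2$ (after normalising the near-witnesses to unit vectors, which is possible because $\|u_n + v_n\| \to 2$ forces $\|u_n\|, \|v_n\| \to 1$). Since $\|u_n - v_n\| \in [\epsilon_0, 2]$, I extract a subsequence on which $\|u_n - v_n\| \to \ell$ for some $\ell \in [\epsilon_0, 2]$. I then define $M = \{m_0, m_1, m_2, m_3\}$ by setting $d(m_0, m_1) = d(m_0, m_2) = d(m_1, m_3) = d(m_2, m_3) = 1$, $d(m_0, m_3) = 2$, and $d(m_1, m_2) = \ell$; the triangle inequalities hold since $\ell \leq 2$.

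For the positive direction I would use the maps $f_n : M \to X$ given by $m_0 \mapsto 0$, $m_1 \mapsto u_n$, $m_2 \mapsto v_n$, $m_3 \mapsto u_n + v_n$. Four of the six pairwise image distances equal $1$ exactly, while the remaining two are $\|u_n + v_n\| \to 2$ and $\|u_n - v_n\| \to \ell$, so every ratio $\|f_n(m_i) - f_n(m_j)\|/d(m_i,m_j)$ tends to $1$ and hence the distortion of $f_n$ tends to $1$. For the negative direction, suppose $g : M \to X$ is an isometry. Since $d(m_0, m_i) + d(m_i, m_3) = 2 = d(m_0, m_3)$ for $i = 1, 2$, both $g(m_1)$ and $g(m_2)$ are metric midpoints of $g(m_0)$ and $g(m_3)$. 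The strict-convexity uniqueness of midpoints recorded in Section 2.2 then forces $g(m_1) = g(m_2)$, contradicting $d(m_1, m_2) = \ell \geq \epsilon_0 > 0$.

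The only real obstacle, and the unique place where the hypothesis is used quantitatively, is guaranteeing $\ell > 0$: this is precisely what the uniform lower bound $\epsilon_0$ arising from the failure of uniform convexity provides. Everything else is a routine distortion estimate and an invocation of the midpoint lemma, so I expect no further difficulties.
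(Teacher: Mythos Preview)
Your proof is correct and follows essentially the same strategy as the paper: use the failure of uniform convexity to produce a four-point metric space that embeds with distortion $1+\epsilon$ for every $\epsilon>0$, and then invoke the unique-midpoint property of strictly convex spaces to block an isometric embedding. The only cosmetic difference is the choice of four-point obstruction: the paper picks $\{a,b,c,d\}$ with $c$ the metric midpoint of $b,d$ and derives a contradiction from $\|c\|<1$, whereas you pick a configuration in which \emph{two} points are simultaneously midpoints of the same pair and derive the contradiction directly from uniqueness---both arguments rest on the same midpoint lemma from Section~2.2.
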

\begin{proof}
Spelling out the property of $X$ not being uniformly convex: there is an $\epsilon \in (0,2]$ such that for each $n$ there are points $x_n,y_n$ with $\|x_n - y_n\| = \epsilon$ and $\|\frac{x_n+y_n}{2}\| \geq 1 - \frac{1}{n}$. Consider the following metric on 4 points $\{a,b,c,d\}$ where $d(a,b) = d(a,c) = d(a,d) = 1$ and $d(b,c) = d(c,d) = \epsilon/2$ and $d(b,d) = \epsilon$. There exist almost isometric embeddings of this metric space into $X$: simply send $a$ to zero, $b,d$ to $x_n,y_n$ respectively and $c$ to $\frac{x_n+y_n}{2}$.

However, there is no isometric embedding. Indeed, if there was one, then without loss of generality $a$ would map to zero, and $b,d$ would map to $S_X$ with $\|b-d\| = \epsilon$. Since isometries into strictly convex metric spaces preserve metric midpoints $c$ would have to map to $(b+d)/2$, but this would be of distance $< 1$ from zero, by strict convexity.
\end{proof}

The formulation in Proposition 4.8 allows us to give an example of a space that is important in PDE theory that fails the conclusion of Question 1.1. To state this we will need to go into properties of Orlicz spaces taken from \cite{orlicz}. We will take the notation from that paper, namely, the function $\varphi$ is the function generating the Orlicz space, $\Phi(t) = \int_0^t \varphi(t^\prime) dt^\prime$, $\psi(t) = \varphi^{-1}(t)$ and $\Psi(t) = \int_0^t \psi(t^\prime) \, d(t^\prime)$. We will only deal with continuous Orlicz functions where these things are well defined, and all continuous.\footnote{This simplifies what is about to be stated, but not by much.}

The results of the paper show the following: Let $V$ denote the supremum attained by the function $\varphi$. Then the Orlicz space generated by the function $\varphi$ is strictly convex if and only if $\psi,\Psi$ both tend to infinity as $v \rightarrow V$. Moreover, the paper also shows that the Orlicz space generated by the function $\varphi$ is uniformly convex just when $\frac{\Phi(2u)}{\Phi(u)}$ is bounded above, and for all $\epsilon \in (0,0.25)$, $\frac{\Phi(u)}{\Phi((1-\epsilon)u)}$ is bounded below.

In the case of $\varphi(t) = e^t - 1$, the Orlicz space corresponding to the function $\varphi$ is called the exponential Orlicz space. In this case, $\Phi(t) = e^t - t - 1$, and $\frac{\Phi(2t)}{\Phi(t)} = \frac{e^{2t} - 2t - 1}{e^t - t - 1} \approx e^t$, which is definitely not bounded above. Thus, the space generated is not uniformly convex. However, in this case $\psi(t) = \log (t+1)$ and $\Psi(t) = (t+1)\log(t+1) - t$, which are both functions that tend to $\infty$ as $t \rightarrow \infty$. We thus have the following.

\begin{example}
The exponential Orlicz space fails the conclusions of Question \ref{quest:u}.
\end{example}

We end this section with a further example whose purpose is twofold. On the one hand it shows that Theorem 4.3 does not generalize. It also sheds further light on the permanence properties of Banach spaces that satisfy the conclusion of Question 1.1.

\begin{example}
Let $p_n \rightarrow \infty$ and let $Y$ be the Banach space $(\oplus_{n=1}^\infty \ell_{p_n}^n)_2$. The space $Y$ is strictly convex, but not uniformly convex, so by Proposition 4.8 it does not satisfy the conclusion of Question 1.1. However, $Y$ contains $\ell_\infty^n$ almost isometrically for each $n$ - this is the space whose existence we promised in the remarks before Theorem 4.3.
\end{example}

This space also allows us to show that several possible permanence properties of spaces that satisfy the conclusion of Question 1.1 fail. Let $X = (\oplus \ell_\infty^n)_2$. Then $d(X,Y) \leq \prod n^{1/p_n}$, which we can make as small as we please. This show that if $X$ satisfies the conclusion Question 1.1, there is not necessarily some $\epsilon > 0$ such that $d(X,Y) \leq 1+\epsilon$ implies that $Y$ satisfies the conclusion of Question 1.1.

\begin{rem}
Indeed, we can construct two spaces $X,Y$ such that $d(X,Y) = 1$ such that $X$ satisfies the conclusion of Question 1.1 and $Y$ does not. Let $d_n$ be a sequence that takes every $m \in \N$ infinitely often, and $p_n \rightarrow \infty$. Then if $Y = (\bigoplus_{m,n} \ell_{p_n}^{d_m})_2$ and $X = (\bigoplus \ell_\infty^n)_2 \bigoplus_2 Y$, then we can prove that $d(X,Y) = 1$. It is evident that $X$ satisfies the conclusion of Question 1.1, and Proposition 4.8 allows us to see that $Y$ does not.
\end{rem}

\section{An Application and Some Open Questions} We give an application of Theorem 3.1 that may be of independent interest. Recall that the $\alpha$-snowflake of a finite metric space $(X,d)$ is the metric space $(X,d^\alpha)$ with $\alpha \in (0,1)$.

\begin{thm}
Suppose that $(X,d)$ is a finite metric space, and $Y$ is an infinite-dimensional Banach space. Then there exists $\alpha^\prime \in (0,1)$ such that whenever $0 < \alpha < \alpha^\prime$, the space $(X,d^\alpha)$ isometrically embeds into $Y$.
\end{thm}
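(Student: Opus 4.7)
The plan is to reduce to Theorem~3.1 by establishing that, for all sufficiently small $\alpha$, the snowflake $(X,d^{\alpha})$ is isometric to an affinely independent subset of $\ell_2$. Once this is done, Theorem~3.1, applied to the infinite-dimensional Banach space $Y$, immediately supplies an isometric embedding of $(X,d^{\alpha})$ into $Y$.

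To carry this out, label the points of $X$ as $x_0,\ldots,x_n$, where $n+1 = |X|$. Note first that $d^{\alpha}$ is a genuine metric for every $\alpha \in (0,1]$, since concavity of $t \mapsto t^{\alpha}$ on $[0,\infty)$ preserves the triangle inequality. Next, observe that $d(x_i,x_j)^{2\alpha} \to 1$ as $\alpha \to 0^{+}$ for every pair $i \neq j$. Consequently, for any subcollection $\{x_{i_0},\ldots,x_{i_k}\}$, the entries of the Cayley--Menger matrix computed with respect to $d^{\alpha}$ converge entry-wise, as $\alpha \to 0^{+}$, to those of the equilateral metric assigning common distance $1$ to all pairs of distinct points.

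Now invoke Theorem~2.3 in the easy direction for this limiting equilateral metric: the equilateral configuration on $k+1$ points is realized isometrically by the vertices of a regular simplex in $\ell_2^k$ as an affinely independent set, so for each $k = 1,\ldots,n$, the corresponding Cayley--Menger determinant has sign $(-1)^{k+1}$ and is in particular nonzero. By continuity of the determinant in its entries and of $\alpha \mapsto d(x_i,x_j)^{2\alpha}$, there exists $\alpha' \in (0,1)$ such that, for every $\alpha \in (0,\alpha')$ and every $k = 1,\ldots,n$, the determinant $\mathrm{CMDet}(x_0,\ldots,x_k)$ computed in the metric $d^{\alpha}$ still has sign $(-1)^{k+1}$. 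Applying the converse direction of Theorem~2.3 then yields an isometric embedding of $(X,d^{\alpha})$ into $\ell_2^n$ as an affinely independent set, for every such $\alpha$.

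Finally, for any such $\alpha$, Theorem~3.1 produces an isometric copy of this affinely independent subset of $\ell_2$ inside $Y$, completing the argument. I do not anticipate a serious obstacle; the only delicate point is checking that the continuity argument can be arranged to yield a \emph{single} threshold $\alpha'$ working uniformly in $k$, which follows immediately from the finiteness of $X$ (only finitely many determinants need to be controlled at once).
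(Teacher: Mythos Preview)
Your argument is correct, and the overall architecture matches the paper's: show that for all sufficiently small $\alpha$ the snowflake $(X,d^{\alpha})$ is isometric to an affinely independent subset of $\ell_2$, then invoke Theorem~3.1. The difference lies in how affine independence is established. The paper imports Proposition~2.2 of \cite{snow}, which produces an explicit isometric embedding $f_{\alpha}:(X,d^{\alpha})\to \ell_2^{|X|}$ with $f_{\alpha}(x_i)\to \tfrac{1}{\sqrt{2}}e_i$ as $\alpha\to 0$, and then deduces linear independence from proximity to a basis. You instead stay entirely within the paper's own toolkit: since $d(x_i,x_j)^{2\alpha}\to 1$, the Cayley--Menger matrices converge to those of the unit equilateral metric, whose determinants one computes directly (indeed, $\mathrm{CMDet}$ on $k+1$ equidistant points equals $(k+1)(-1)^{k+1}$), and then continuity plus Theorem~2.3 does the rest. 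Your route is more self-contained and arguably cleaner, avoiding the external citation; the paper's route has the mild advantage of exhibiting the limiting embedding explicitly. Both exploit the same underlying phenomenon---that snowflakes degenerate to the regular simplex as $\alpha\to 0$---so neither is substantially more general than the other.
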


The proof of this consists of combining two ingredients, the first being Theorem 3.1 of this paper. The second ingredient is a result of Le Donne, Rajala and Walsberg in \cite{snow}. Proposition 2.2 in this paper shows that for $\alpha$ sufficiently close to zero, the space $(X,d^\alpha)$ embeds into $\ell_2^{|X|}$ isometrically.

\begin{proof}
Let $x_1,\dots,x_n$ be the points of $X$. The proof of Proposition 2.2 in \cite{snow} shows that, for $\alpha$ sufficiently small, there is an embedding that takes the points of $X$ to points close to $\frac{1}{\sqrt{2}} e_1, \frac{1}{\sqrt{2}} e_2,\dots, \frac{1}{\sqrt{2}} e_n$. More precisely, if we let the isometric embedding be denoted by $f_\alpha : (X,d^\alpha) \rightarrow \ell_2^{|X|}$ we have that $f_\alpha(x_i) \rightarrow \frac{1}{\sqrt{2}} e_i$, as $\alpha \rightarrow 0$.

Let $\alpha^\prime > 0$ be such that $\forall \alpha < \alpha^\prime$ we have $\|f_\alpha(x_i) - \frac{1}{\sqrt{2}} e_i\| < \frac{1}{100n}$. Then, for each $\alpha \in (0,\alpha^\prime)$ we have a basis $\frac{1}{\sqrt{2}} e_i$, and a sequence $f_\alpha(x_i)$ such that $\sum_{i=1}^n \|f_\alpha(x_i) - \frac{1}{\sqrt{2}} e_i\| < \frac{1}{100}$.

It follows easily that the $f_\alpha(x_i)$ are linearly independent, and so we are in a position to apply Theorem 3.1.
\end{proof}

\subsection{Open Questions:} 
The main question that remains open is the following:
\begin{quest}
	Let $X$ be an infinite-dimensional Banach space, and $A$ a finite subset of $\ell_2$. Then does $A$ isometrically embed into $X$?
\end{quest}
Our paper establishes this result is true whenever $A$ is affinely independent, or when $X$ belongs to various natural classs of Banach spaces (eg, any $p$-direct sum of Banach spaces).

For affinely dependent sets, we are able to prove the following result: Suppose $X$ is a Banach space of dimension 2 or above. Let $A = \{0,x,y,\alpha x + (1-\alpha) y\}$ where $x,y \in \ell_2^2$ and $\alpha \in [0,1]$. Then $A$ isometrically embeds into $X$. This is the first non-obvious affinely dependent set. The proof consists of constructing a 1-parameter family of embeddings of $A$ that are isometric on $\{0,x,y\}$. Then an intermediate value property is used to find one that is isometric on $A$. However, our method does not seem to generalize to more complicated affinely dependent sets.

Our work in Section 4 leads naturally to the following question:
\begin{quest}
	Let $X$ be an infinite-dimensional, uniformly convex space. Does $X$ contain isometric copies of every finite metric space it contains almost isometrically?
\end{quest}

We finally mention a generalization of Questions 1.1 and 1.2 that may be of further interest.
\begin{quest}
	Let $\mathcal{Y}$ by a class of metric spaces. We say that a Banach space $X$ has property $U(\mathcal{Y})$ if every space $Y \in \mathcal{Y}$ that almost isometrically embeds into $X$ embeds isometrically into $X$. Are there characterizations of $\{X : X$ has $U(\mathcal{Y})\}$ for natural classes of spaces $\mathcal{Y}$?
\end{quest}

Consider the following classes of metric spaces:
\begin{itemize}
	\item Let $\mathcal{Y}_1$ be the class of finite equilateral metric spaces.
	\item Let $\mathcal{Y}_2$ be the class of finite affinely independent subsets of $\ell_2$.
	\item Let $\mathcal{Y}_3$ be the class of finite subsets of $\ell_2$.
	\item Let $\mathcal{Y}_4$ be the class of finite concave metric spaces
	\item Let $\mathcal{Y}_5$ be the class of finite metric spaces.
\end{itemize}

The work of Swanepoel in \cite {swane} shows that all infinite-dimensional Banach spaces have property $U(\mathcal{Y}_1)$, and our paper shows that all infinite-dimensional Banach spaces have property $U(\mathcal{Y}_2)$. Question 5.2 is asking whether every Banach space has property $U(\mathcal{Y}_3)$.

Our work also shows that there are spaces that have property $U(\mathcal{Y}_3)$ but not property $U(\mathcal{Y}_5)$ and shows that every infinite-dimensional space with no non-trivial cotype has $U(\mathcal{Y}_4)$. 

Question 5.3 is asking whether every infinite-dimensional uniformly convex Banach space has property $U(\mathcal{Y}_5)$

\section*{Acknowledgements}
I would like to thank my PhD supervisor Andras Zsak for some helpful comments and suggestions. I would also like to thank Bill Johnson and Mikhail Ostrovskii for answering the question I initially posed on mathoverflow.net that started the idea for this paper. I would like to thank Lionel Nguyen Van Th\'{e} for bringing to my attention the paper \cite{shkarin}.
\end{document}